\newtheorem{theorem}{Theorem}
\newtheorem{proposition}{Proposition}[section]
\newtheorem{lemma}[proposition]{Lemma}
\DeclareMathOperator{\dimaff}{dim_{\mathsf{aff}}}
\DeclareMathOperator{\GL}{GL}
\DeclareMathOperator{\Span}{span}
\newcommand{\threebar}[1]{{\left\vert\kern-0.25ex\left\vert\kern-0.25ex\left\vert #1 
    \right\vert\kern-0.25ex\right\vert\kern-0.25ex\right\vert}}
\newcommand{\R}{\mathbb{R}}
\newcommand{\C}{\mathbb{C}}
\title[On affine IFS with invariant affine subspaces] {On affine iterated function systems which robustly admit an invariant affine subspace}
\author{Ian D. Morris}
\address{School of Mathematical Sciences, Queen Mary, University of London, Mile End Road, London E1 4NS, U.K.}
\email{i.morris@qmul.ac.uk }
\begin{document}

\begin{abstract}
In this note we give a simple sufficient condition for an affine iterated function system to admit an invariant affine subspace persistently with respect to changes in the translation parameters. This yields further examples of tuples of contracting linear maps which do not satisfy the conclusions of  Falconer's theorem on the Hausdorff dimension of almost every self-affine set. We also obtain new examples of iterated function systems of similarity transformations which cannot satisfy the open set condition for any choice of translation parameters, and resolve a related question of Peres and Solomyak.
\end{abstract}
\maketitle




\section{Introduction}

\subsection{Motivation and background} 
We recall that an \emph{iterated function system} is a tuple of contracting transformations $T_1,\ldots,T_N$ of a complete metric space $X$. In this note $X$ will always be either $\R^d$ or one of its affine subspaces and the metric on $X$ will be that induced by a norm (but not necessarily the Euclidean norm). It is classical that for every iterated function system $(T_1,\ldots,T_N)$ acting on such a space $X$ there exists a unique nonempty compact set $Z\subseteq X$ which satisfies $Z=\bigcup_{i=1}^N T_iZ$ and this set is usually called the \emph{attractor} or \emph{limit set} of the iterated function system. 

We recall that $(T_1,\ldots,T_N)$ is said to satisfy the \emph{open set condition} if there exists a nonempty open set $U \subseteq X$ such that $\bigcup_{i=1}^N T_iU \subseteq U$ and such that the images $T_iU$ are pairwise disjoint. In the case where every $T_i$ is a similarity transformation of $\R^d$ and the open set condition is satisfied, the dimension characteristics of the attractor have been well-understood since the foundational work of J.E. Hutchinson \cite{Hu81} in 1981. When the open set condition fails, or when the transformations are allowed to be arbitrary affine contractions as opposed to similitudes, the dimension theory of the attractor is much less well understood and the relaxation of both of these conditions is a substantial topic of ongoing research (see for example \cite{BaHoRa19,Fe19,Ho20,HoRa19}). An early landmark result in this area is the following result (see \cite{Fa88}):
\begin{theorem}[Falconer]\label{th:falc}
Let $N,d \geq 1$ and $A_1,\ldots,A_N \in \GL_d(\R)$ and suppose that $\max_{1 \leq i \leq N}\|A_i\|<\frac{1}{3}$. Then for Lebesgue almost every $(v_1,\ldots,v_N) \in \R^d\oplus \cdots \oplus \R^d \simeq \R^{dN}$ the Hausdorff dimension of the attractor of the affine iterated function system $T_1,\ldots,T_N \colon \R^d \to \R^d$ defined by $T_ix:=A_ix+v_i$ is equal to either the \emph{affinity dimension} of $(T_1,\ldots,T_N)$ or to $d$, whichever is smaller.
\end{theorem}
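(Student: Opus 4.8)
The plan is to prove the two inequalities $\dimh Z_{\mathbf v}\le\min\{d,s^\ast\}$ and $\dimh Z_{\mathbf v}\ge\min\{d,s^\ast\}$ separately, where $Z_{\mathbf v}$ is the attractor and $s^\ast=\dimaff(T_1,\ldots,T_N)$ is the affinity dimension, which I would define as the unique zero of the pressure function $P(s)=\lim_{n\to\infty}\tfrac1n\log\sum_{|u|=n}\phi^s(A_u)$; here $\phi^s$ is the singular value function, $A_u=A_{u_1}\cdots A_{u_n}$ for $u=u_1\cdots u_n\in\{1,\ldots,N\}^n$, and the limit exists by submultiplicativity $\phi^s(AB)\le\phi^s(A)\phi^s(B)$. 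The first inequality holds for \emph{every} $\mathbf v$ and uses only $\max_i\|A_i\|<1$: fixing a ball $B\supseteq Z_{\mathbf v}$ one has $Z_{\mathbf v}=\bigcup_{|u|=n}T_uB$, each $T_uB$ is an ellipsoid whose semiaxes are comparable to the singular values of $A_u$, and such an ellipsoid can be covered by $O\big(\phi^s(A_u)\,r_u^{-s}\big)$ balls of radius $r_u=\alpha_{\lceil s\rceil}(A_u)$; summing, $\mathcal H^s_{\delta_n}(Z_{\mathbf v})\lesssim\sum_{|u|=n}\phi^s(A_u)$ with $\delta_n\to0$, and this tends to $0$ when $s>s^\ast$. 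Together with the trivial bound $\dimh Z_{\mathbf v}\le d$ this gives $\dimh Z_{\mathbf v}\le\min\{d,s^\ast\}$ for all $\mathbf v$.

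For the reverse inequality it is enough to show, for each fixed non-integral $s<\min\{d,s^\ast\}$, that $\dimh Z_{\mathbf v}\ge s$ for Lebesgue-a.e.\ $\mathbf v$; the theorem then follows by letting $s\uparrow\min\{d,s^\ast\}$ along a countable sequence and exhausting $\R^{dN}$ by cubes $Q_R=[-R,R]^{dN}$. Fix $R$, let $\mu$ be a (carefully chosen) probability measure on $\Sigma=\{1,\ldots,N\}^{\mathbb N}$, let $\pi_{\mathbf v}\colon\Sigma\to Z_{\mathbf v}$ be the coding map $\pi_{\mathbf v}(\omega)=\lim_nT_{\omega_1}\cdots T_{\omega_n}(0)=\sum_{k\ge0}A_{\omega_1\cdots\omega_k}v_{\omega_{k+1}}$, and put $\mu_{\mathbf v}=(\pi_{\mathbf v})_\ast\mu$. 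By the energy form of Frostman's lemma it suffices to show that $\int_{Q_R}I_s(\mu_{\mathbf v})\,d\mathbf v=\int_{Q_R}\iint|\pi_{\mathbf v}(\omega)-\pi_{\mathbf v}(\tau)|^{-s}\,d\mu(\omega)\,d\mu(\tau)\,d\mathbf v<\infty$: by Tonelli the inner integral in $\mathbf v$ is then finite for a.e.\ $\mathbf v\in Q_R$, so $I_s(\mu_{\mathbf v})<\infty$ and hence $\dimh Z_{\mathbf v}\ge s$ for a.e.\ such $\mathbf v$.

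The key estimate I would establish is the transversality bound $\int_{Q_R}|\pi_{\mathbf v}(\omega)-\pi_{\mathbf v}(\tau)|^{-s}\,d\mathbf v\le C\,\phi^s(A_{\omega\wedge\tau})^{-1}$, where $\omega\wedge\tau$ is the longest common prefix of $\omega\ne\tau$. The point is that $\mathbf v\mapsto\pi_{\mathbf v}(\omega)$ is linear, so if $\omega,\tau$ first disagree at position $m+1$, say $\omega_{m+1}=i\ne j=\tau_{m+1}$, the shared initial terms cancel to leave $\pi_{\mathbf v}(\omega)-\pi_{\mathbf v}(\tau)=A_{\omega\wedge\tau}\big[(v_i-v_j)+e(\mathbf v)\big]$ with $e(\mathbf v)=A_i\pi_{\mathbf v}(\sigma^{m+1}\omega)-A_j\pi_{\mathbf v}(\sigma^{m+1}\tau)$. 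Freezing every translation coordinate except $v_i$, the map $v_i\mapsto(v_i-v_j)+e(\mathbf v)$ is an affine map of $\R^d$ with linear part $I+M$; since the coefficient of $v_i$ in any $\pi_{\mathbf v}(\cdot)$ has norm at most $\sum_{k\ge0}(\max_j\|A_j\|)^k=(1-\max_j\|A_j\|)^{-1}$, one gets $\|M\|\le 2\max_j\|A_j\|/(1-\max_j\|A_j\|)<1$ \emph{precisely because} $\max_j\|A_j\|<\tfrac13$ — this is exactly where the hypothesis is used. Then the change of variables $y=(v_i-v_j)+e(\mathbf v)$ has Jacobian bounded below by $(1-\|M\|)^d>0$, and after performing it and integrating out the remaining $d(N-1)$ coordinates over a cube the estimate reduces to the elementary fact that $\int_{\{|y|\le1\}}|Ay|^{-s}\,dy\le C(d,s)\phi^s(A)^{-1}$ for every $A\in\GL_d(\R)$ and every non-integral $s\in(0,d)$, which one checks by diagonalising $A$ and decomposing the resulting ellipsoid dyadically.

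Granting this, the triple integral is $\le C\iint\phi^s(A_{\omega\wedge\tau})^{-1}\,d\mu(\omega)\,d\mu(\tau)=C\sum_{m\ge0}\sum_{|u|=m}\mu([u])^2\phi^s(A_u)^{-1}$, the diagonal contributing nothing as $\mu$ is non-atomic. Writing $Z_m=\sum_{|u|=m}\phi^s(A_u)$ and taking for $\mu$ an equilibrium state for the subadditive potential $u\mapsto\log\phi^s(A_u)$, so that $\mu([u])\asymp\phi^s(A_u)/Z_m$, this sum is $\lesssim\sum_m Z_m^{-1}$, which converges because $s<s^\ast$ forces $P(s)>0$ and hence geometric growth of $Z_m$. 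I expect the two main obstacles to be: first, the transversality estimate with the sharp constant $\tfrac13$, i.e.\ the precise control of the error term $e(\mathbf v)$; and second, the construction of a measure $\mu$ with the required Gibbs-type regularity, which genuinely needs the subadditive thermodynamic formalism — a Bernoulli product measure does \emph{not} suffice here, since submultiplicativity only bounds $\phi^s(A_u)^{-1}$ from below by the product of the $\phi^s(A_{u_k})^{-1}$, so the naive computation with a product measure diverges. This is precisely why Hutchinson's self-similar case is easier: there $\phi^s$ is multiplicative and a product measure does the job.
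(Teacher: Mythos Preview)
The paper does not contain a proof of this statement: Theorem~\ref{th:falc} is quoted as background from Falconer's 1988 article \cite{Fa88}, and the present paper proves only Theorems~\ref{th:zeroth-main} and~\ref{th:first-main}. There is therefore no ``paper's own proof'' to compare against.

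That said, your sketch is a faithful outline of Falconer's original argument. The upper bound via the natural cover by ellipsoids, the reduction of the lower bound to an $s$-energy estimate, and the transversality computation isolating the $\tfrac13$ threshold through the bound $\|M\|\le 2\max_i\|A_i\|/(1-\max_i\|A_i\|)<1$ are all exactly as in \cite{Fa88}. The one place where your write-up oversells what is available is the choice of $\mu$: you assert that an equilibrium state for the subadditive potential $\log\varphi^s$ satisfies a two-sided Gibbs bound $\mu([u])\asymp\varphi^s(A_u)/Z_m$, but this is \emph{not} known in general (K\"aenm\"aki-type measures need not be Gibbs without extra hypotheses such as irreducibility or quasi-multiplicativity). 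For the energy estimate only the upper bound $\mu([u])\le C\,\varphi^s(A_u)$ with $\sum_{|u|=m}\varphi^s(A_u)$ growing is actually needed, and Falconer obtains a measure with this property by an elementary compactness argument---taking a weak-$*$ limit of the level-$k$ measures $\nu_k([u])=\varphi^s(A_u)/Z_k$ and using submultiplicativity directly---rather than by invoking subadditive thermodynamic formalism. So the obstacle you flag at the end is real, but the resolution is more elementary than you suggest.
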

Here the affinity dimension (in some works called the singularity dimension or nominal dimension) is a real number depending only on $(A_1,\ldots,A_N)$ which is defined as follows. Let $M_d(\R)$ denote the vector space of all $d \times d$ real matrices and recall that for $A \in M_d(\R)$ the \emph{singular values} of $A$ are defined to be the eigenvalues of the positive semidefinite matrix $A^TA$, listed in decreasing order with repetition according to multiplicity. We denote the singular values of $A$ by $\sigma_1(A) \geq \sigma_2(A)\geq \cdots \geq \sigma_d(A)$. Following \cite{Fa88}, for every $s \geq 0$ we define a function $\varphi^s \colon M_d(\R) \to [0,\infty)$ by
\[\varphi^s(A):=\left\{\begin{array}{cl}\sigma_1(A)\cdots \sigma_{\lfloor s\rfloor}(A) \sigma_{\lceil s\rceil}(A)^{s-\lfloor s\rfloor}&\text{if }0 \leq s \leq d,\\
|\det A|^{\frac{s}{d}}&\text{otherwise.}\end{array}\right.\]
The \emph{affinity dimension} of $(A_1,\ldots,A_N) \in M_d(\R)^N$ is then defined by
\[\dimaff(A_1,\ldots,A_N) :=\inf\left\{s \geq 0\colon \lim_{n \to \infty} \frac{1}{n}\log \sum_{i_1,\ldots,i_n=1}^N \varphi^s(A_{i_1}\cdots A_{i_n}) \leq 0\right\}.\](When $A_1,\ldots,A_N$ are invertible this infimum is attained, a fact which will be appealed to implicitly in \S\ref{ss:e2} below.) When  $T_1,\ldots,T_N\colon \R^d \to \R^d$ are affine transformations of the form $T_ix \equiv A_ix+v_i$ we will also write $\dimaff(T_1,\ldots,T_N):=\dimaff(A_1,\ldots,A_N)$. In the case where the transformations $T_i$ are similarities with contraction ratios given by $r_i \in (0,1)$, we have $\varphi^s(A_{i_1}\cdots A_{i_n})\equiv r_{i_1}^s\cdots r_{i_n}^s$ for all $s \geq 0$ and the affinity dimension is simply the unique solution $s\geq 0$ to the equation $\sum_{i=1}^N r_i^s=1$. This fact will also be appealed to in this introduction and in \S\ref{se:ex} below.
 
 While Theorem \ref{th:falc} is conventionally stated using the Euclidean norm, with only minor modifications to the proof any norm on $\R^d$ may be used. (This modification has the advantage that both the conclusion of the theorem and its hypotheses become invariant with respect to change of co-ordinates in $\R^d$, whereas in the standard formulation only the conclusion has this property.) Subsequent work of B. Solomyak \cite{So98} demonstrated that the constant $\frac{1}{3}$ in Theorem \ref{th:falc} may be relaxed to a value of $\frac{1}{2}$ and also showed that no further improvement in the value of this constant is possible in general. In this note we will be concerned with examples of tuples of linear maps $(A_1,\ldots,A_N)$ which are not $\frac{1}{2}$-contracting with respect to any norm on $\R^d$ and for which the conclusions of Theorem \ref{th:falc} do not hold. We first review some prior examples of this phenomenon.


%
%

\subsection{The example of Przytycki, Urba\'nski, Edgar and Solomyak.} An example based on a construction of Przytycki and Urba\'nski \cite{PrUr89} can be applied to show that the constant in Theorem \ref{th:falc} cannot be replaced with $\frac{1}{2}+\varepsilon$ for any $\varepsilon>0$; the relevance of Przytycki and Urba\'nski's construction to this problem was first remarked upon by G.A. Edgar \cite{Ed92} and expanded upon by B. Solomyak \cite{So98}. The example is as follows. Define $A_1, A_2 \in \GL_2(\R)$ by
\[A_1=A_2=\begin{pmatrix}1/\beta&0\\ 0&\frac{1}{2}\end{pmatrix}\]
where $\beta \in (1,2)$ is a Pisot-Vijayaraghavan number, that is, a real algebraic integer greater than $1$ all of whose Galois conjugates are less than one in absolute value. Edgar noted in \cite{Ed92} that the choice of translation vectors $v_1, v_2 \in \R^2$ has very little effect on the dimension of the attractor, as follows. If $T_1$, $T_2$ are given by $T_ix:=A_ix +v_i$ for some $v_1, v_2 \in \R^2$, then changing co-ordinates via a translation we may without loss of generality assume that $v_1$ is the zero vector. On the other hand, as long as $v_2$ does not lie on the horizontal or vertical axis in the translated co-ordinates (or equivalently, if $v_1$ and $v_2$ did not lie on the same horizontal or vertical line in the original co-ordinates) then a further, linear change of co-ordinates by conjugation with a diagonal matrix allows us to assume without loss of generality that $v_2=(1-1/\beta,1/2)$ so that the fixed points of $T_1$ and $T_2$ are $(0,0)$ and $(1,1)$ respectively. It follows that for Lebesgue almost every choice of $v_1, v_2 \in \R^2$ the dimension of the attractor equals that of the iterated function system defined by
\[T_1 \begin{pmatrix} x\\ y\end{pmatrix}:=\begin{pmatrix}1/\beta&0\\ 0&\frac{1}{2}\end{pmatrix}\begin{pmatrix}x\\y\end{pmatrix},\qquad T_2 \begin{pmatrix} x\\ y\end{pmatrix}:=\begin{pmatrix}1/\beta&0\\ 0&\frac{1}{2}\end{pmatrix}\begin{pmatrix}x\\y\end{pmatrix} +\begin{pmatrix} 1-\frac{1}{\beta} \\ \frac{1}{2}\end{pmatrix}\]
which had earlier  been studied in \cite{PrUr89}.
This iterated function system satisfies the open set condition with $U:=(0,1) \times (0,1)$. Let $\Pi \colon \R^2 \to \R$ denote orthogonal projection on the first co-ordinate. The Pisot-Vijayaraghavan property of $\beta$ implies that for every $n \geq 1$ the number of \emph{distinct} maps of the form $\Pi T_{i_1}\cdots T_{i_n}$ is bounded by a constant times $\beta^n$. Since $\beta<2$ this implies that significant numbers of the distinct images $T_{i_1}\cdots T_{i_n}U$ are arranged close to one another in vertical columns, and this implies the existence of a sequence of covers for the attractor giving a stronger upper bound for the Hausdorff dimension than that arising from Theorem \ref{th:falc}. For a rigorous argument we direct the reader to \cite{PrUr89}. It had been observed by Edgar that the preceding example implies that the constant $\frac{1}{3}$ in Theorem \ref{th:falc} cannot be specifically improved to the number $\frac{\sqrt{5}-1}{2}\simeq 0.618\ldots$ (which is the reciprocal of the golden ratio, a Pisot-Vijayaraghavan number) and Solomyak subsequently noted in \cite{So98} that since Pisot-Vijayaraghavan numbers accumulate from below at $2$, this class of examples precludes the replacement of the contraction constant in Theorem \ref{th:falc} with any number strictly greater than one half.


\subsection{An example of Simon and Solomyak.}\label{ss:ss} The preceding example operates by using an algebraic property of the horizontal contraction ratio common to the two maps to force two distinct compositions $T_{i_1}\cdots T_{i_n}$ and $T_{j_1}\cdots T_{j_n}$ to coincide when composed with a projection. An alternative construction introduced by K. Simon and B. Solomyak in \cite{SiSo02} operates more directly by inducing coincidences among the maps $T_{i_1}\cdots T_{i_n}$ themselves. We present their example in a slightly modified form. Let $d \geq 2$ and let $T_1, T_2 \colon \R^d \to \R^d$ be given by $T_ix:=\lambda x+v_i$ where $\lambda \in (0,1)$ satisfies an equation of the form $\sum_{r=1}^n \lambda^{r-1}(j_r-k_r) =0$ for some $n \geq 1$ and some $j_1,\ldots,j_n, k_1,\ldots,k_n \in \{1,2\}$ such that $(j_1,\ldots,j_n) \neq (k_1,\ldots,k_n)$, all of which we fix for the remainder of this discussion. It is shown in \cite{SiSo02} that such values of $\lambda$ occur densely in the interval $(\frac{1}{2},1)$. We suppose additionally that $2\lambda^d<1$ which implies that the affinity dimension of $(T_1,T_2)$ is strictly less than $d$.  For every $x \in \R^d$ we have
\begin{align*}T_{j_1}\cdots T_{j_n}x - T_{k_1}\cdots T_{k_n}x&= \left(\lambda^nx+\sum_{r=1}^n \lambda^{r-1} v_{j_r}\right) - \left(\lambda^nx+\sum_{r=1}^n \lambda^{r-1} v_{k_r}\right)\\
&=\sum_{r=1}^n \lambda^{r-1} (v_{j_r} - v_{k_r})\\
&=\sum_{r=1}^n \lambda^{r-1}(j_r-k_r)(v_2-v_1)=0\end{align*}
and this yields $T_{j_1}\cdots T_{j_n}=T_{k_1}\cdots T_{k_n}$ irrespective of the precise values of $v_1$ and $v_2$. Now let $T_1',\ldots,T_m'$ be a complete list of the \emph{distinct} maps of the form $T_{i_1} \cdots T_{i_n}$, where by the preceding observation we have $m<2^n$. Let $Z$ denote the attractor of $(T_1,T_2)$. By iterating the relation $Z=\bigcup_{i=1}^2 T_iZ$ we obtain
\[Z=\bigcup_{i=1}^2 T_iZ = \bigcup_{i_1,\ldots,i_n=1}^2 T_{i_1}\cdots T_{i_n}Z=\bigcup_{i=1}^{m}T_i'Z\]
and therefore $Z$ is the attractor of the iterated function system $(T_1',\ldots,T_m')$ as well as of $(T_1,T_2)$. It is straightforward to check using the formula $\sum_{i=1}^N r_i^s=1$ that if $s$ denotes the affinity dimension of $(T_1,T_2)$ and $s'$ denotes that of $(T_1',\ldots,T_m')$ then $2^n\lambda^{ns'}>m\lambda^{ns'}=1=2\lambda^s=2^n\lambda^{ns}$ and this implies $s'<s$. By a further result of Falconer (see \cite{Fa88}) the affinity dimension of an affine iterated function system is unconditionally an upper bound for the Hausdorff dimension of its attractor, so the Hausdorff dimension of $Z$ is bounded by $\dimaff(T_1',\ldots,T_m')$ which is strictly smaller than $\dimaff(T_1,T_2)$. Since the precise value of the vectors $v_1,v_2 \in \R^d$ played no role in this argument, and since $\dimaff(T_1,T_2)<d$, this construction yields further examples of tuples of linear maps for which the conclusions of Theorem \ref{th:falc} do not hold.

This example $(T_1,T_2)$ may easily be extended to any strictly larger affine iterated function system $(T_1,T_2,\ldots,T_N)$ which includes the two maps $T_1$ and $T_2$ defined above and which satisfies $\dimaff(T_1,\ldots,T_N)<d$, as long as the additional maps $T_i$ are chosen to be invertible. Indeed, we may apply precisely the same arguments to $T_1$ and $T_2$ to deduce that for some $n \geq 1$ the attractor of $(T_1,\ldots,T_N)$ is also the attractor of an iterated function system formed from a proper subset of the $N^n$ maps $T_{i_1}\cdots T_{i_n}$, and by a theorem of J. Bochi and the author (see \cite{BoMo18}) the latter iterated function system has smaller affinity dimension than the former, which implies that the Hausdorff dimension of the attractor is strictly less than $\dimaff(T_1,\ldots,T_N)$. With appropriate choices of additional maps $T_3,\ldots,T_N$ this construction yields examples of affine iterated function systems which are \emph{irreducible} (that is, whose linearisations do not preserve a nonzero proper subspace of $\R^d$) but for which the conclusions of Theorem \ref{th:falc} do not hold. This answers negatively a question of Peres and Solomyak \cite[Question 3.4]{PeSo98}. Indeed, by this mechanism even stronger algebraic properties such as strong irreducibility or the Zariski density of the semigroup generated by the linearisations may be obtained with equal ease.

We note one further variation on this example which gives an appealingly concise answer to Peres and Solomyak's question. For this construction it is convenient to identify $\R^2$ with $\C$. If $\lambda \in \C\setminus\R$ satisfies $|\lambda|<1/\sqrt{2}$ and solves an equation of the form $\sum_{r=1}^n \lambda^{r-1}(j_r-k_r) =0$ as before, define two similarity transformations $T_1, T_2 \colon \C \to \C$ by $T_iz:=\lambda z +w_i$ where $w_1, w_2 \in \C$ are arbitrary. By exactly the same calculations as before we have $T_{j_1}\cdots T_{j_n}=T_{k_1}\cdots T_{k_n}$ and the Hausdorff dimension of the attractor of $(T_1,T_2)$ is smaller than the affinity dimension which in turn is smaller than $2$. If $\lambda/|\lambda|$ is not a root of unity then this example is moreover \emph{strongly irreducible:} its linearisation does not preserve any finite union of lines through the origin. In simple cases such as $\lambda^4+\lambda^3+\lambda^2-\lambda+1=0$ this property may be verified directly by computing the minimal polynomial of $\lambda^2/|\lambda|^2=\lambda/\overline{\lambda}$ and checking that it is not cyclotomic.

\subsection{A simpler example.}\label{ss:simp} The following rather different example is much simpler to formulate and to prove than the previous two. Although it has circulated informally in seminars and internet forums\footnote{The author learned of this example from a seminar given by T.M. Jordan in 2018. An answer posted on the mathematics research forum MathOverflow by P. Shmerkin  in 2013  appeals to this example: {see \href{http://mathoverflow.net/questions/136193}{\tt{http://mathoverflow.net/questions/136193}}}.} the author is unaware of any reference in the formal literature. Let $\lambda \in (\frac{1}{2},1)$ be arbitrary, define $A_i:=\lambda I \in \GL_2(\R)$ for $i=1,2$, and for each 
pair $v_1, v_2 \in \R^2$ consider the affine iterated function system $T_1,T_2 \colon \R^2 \to \R^2$ defined by $T_ix:=A_ix+v_i$. As long as $v_1$ and $v_2$ are distinct, we may apply an affine co-ordinate change so as to assume without loss of generality that $T_1$ has its unique fixed point located at the origin whereas $T_2$ has its unique fixed point located at $(1,0)$. The attractor of $T_1$ and $T_2$ in the new co-ordinates is therefore just the unit interval in $\R^2$ and has Hausdorff dimension equal to $1$. On the other hand the affinity dimension of $(T_1,T_2)$ is easily computed to equal $\log 2 / \log(1/\lambda)>1$. By appeal to Hutchinson's theorem it also follows that the pair of maps $(T_1,T_2)$ does not satisfy the open set condition for any choice of the parameters $v_1, v_2$.  

\subsection{Main results}

 In this note we develop the third of the three examples considered above into a much more general class of examples based on the mechanism of choosing linear maps $A_1,\ldots,A_N$ which force the existence of an invariant affine subspace of $\R^d$ for all choices of the translation parameters $v_1,\ldots,v_N$. We prove the following result:
\begin{theorem}\label{th:zeroth-main}
Let $T_1,\ldots,T_N$ be affine transformations of $\R^d$ all of which are contracting with respect to some fixed norm on $\R^d$, and for each $i=1,\ldots,N$ let $A_i \in M_d(\R)$ denote the linearisation of $T_i$.
Let $D \geq 1$ be the dimension of the smallest subalgebra of $M_d(\R)$ which contains $A_1,\ldots,A_N$ and the identity. Then there exists an affine subspace $X \subseteq \R^d$ with dimension not greater than $(N-1)D$ such that $T_iX \subseteq X$ for every $i=1,\ldots,N$. The affine subspace $X$ contains the attractor of $(T_1,\ldots,T_N)$ as a subset; in particular if $(N-1)D < \dimaff (T_1,\ldots,T_N)$ then the Hausdorff dimension, of the attractor of $(T_1,\ldots,T_N)$ is strictly smaller than the affinity dimension.
\end{theorem}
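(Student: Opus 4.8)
The plan is to construct the affine subspace $X$ explicitly from the fixed points of the maps $T_i$ together with the subalgebra $\mathcal{A} \subseteq M_d(\R)$ generated by $A_1,\ldots,A_N$ and the identity, which by hypothesis has dimension $D$. Since each $T_i$ is a contraction of the complete metric space $\R^d$, Banach's fixed point theorem gives a unique fixed point $p_i \in \R^d$, and comparing $T_ip_i = p_i$ with $T_ix = A_ix + v_i$ yields $v_i = (I-A_i)p_i$, so that $T_ix = A_ix + (I-A_i)p_i$ for every $x$. I would then write $\mathcal{A}(v) := \{Mv : M \in \mathcal{A}\}$ for $v \in \R^d$ and set
\[W := \sum_{i=2}^N \mathcal{A}(p_i-p_1) \subseteq \R^d, \qquad X := p_1 + W.\]

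The first task is to check $T_iX \subseteq X$ for each $i$. A one-line computation using $v_i = (I-A_i)p_i$ gives, for $w \in W$,
\[T_i(p_1+w) = p_1 + \bigl(A_iw + (I-A_i)(p_i-p_1)\bigr),\]
so it suffices to show $A_iw \in W$ and $(I-A_i)(p_i-p_1) \in W$. The first holds because $W$ is invariant under every element of $\mathcal{A}$: it is spanned by vectors of the form $M(p_j-p_1)$ with $M \in \mathcal{A}$ and $2 \le j \le N$, and for $M' \in \mathcal{A}$ we have $M'M(p_j-p_1) \in \mathcal{A}(p_j-p_1) \subseteq W$ since $\mathcal{A}$ is closed under multiplication; and $A_i \in \mathcal{A}$. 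The second holds because $I-A_i \in \mathcal{A}$, so $(I-A_i)(p_i-p_1)$ is one of the spanning vectors of $W$ (and for $i=1$ it is simply $0$). The bound $\dim X = \dim W \le (N-1)D$ is then immediate: for each fixed $i$ the space $\mathcal{A}(p_i-p_1)$ is the image of the linear map $\mathcal{A} \to \R^d$, $M \mapsto M(p_i-p_1)$, hence has dimension at most $\dim\mathcal{A} = D$, and $W$ is a sum of $N-1$ such subspaces.

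It remains to place the attractor inside $X$ and extract the dimension conclusion. Since $X$ is a nonempty closed subset of $\R^d$ it is complete in the induced metric, and each $T_i$ restricts to a contraction of $X$ with the same contraction constant; hence the iterated function system $(T_1|_X,\ldots,T_N|_X)$ has a unique nonempty compact attractor $Z' \subseteq X$ satisfying $Z' = \bigcup_{i=1}^N T_iZ'$. As the attractor $Z$ of $(T_1,\ldots,T_N)$ in $\R^d$ is the only nonempty compact set with this property, $Z = Z' \subseteq X$. Therefore $\dimh Z \le \dim X \le (N-1)D$, so if $(N-1)D < \dimaff(T_1,\ldots,T_N)$ the Hausdorff dimension of the attractor lies strictly below the affinity dimension, which, by the further result of Falconer quoted above, is always an upper bound for it.

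I do not expect any serious obstacle here; essentially all of the content lies in guessing the right affine subspace. The one point requiring care is the shape of $W$: it must simultaneously be invariant under the linear parts $A_i$ and absorb the ``translational error'' vectors $(I-A_i)(p_i-p_1)$, and this is exactly what forces each summand to be an $\mathcal{A}$-submodule of the form $\mathcal{A}(p_i-p_1)$. Once this is in place the algebra axioms make every verification routine, and the dimension count reduces to the observation that a cyclic $\mathcal{A}$-submodule is a linear image of $\mathcal{A}$; the identification of the attractor with $Z'$ is the usual uniqueness argument for Hutchinson operators.
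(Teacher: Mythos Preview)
Your proof is correct and follows essentially the same construction as the paper: the invariant affine subspace is a fixed point translate of the $\mathcal{A}$-submodule generated by the differences $p_i-p_1$ of fixed points, and the dimension bound comes from writing this module as a sum of $N-1$ cyclic $\mathcal{A}$-submodules. The paper merely packages this through the general criterion of Theorem~\ref{th:first-main}(ii) (via Lemmas~\ref{le:re} and the one following), whereas you carry out the explicit verification directly; the underlying computation and dimension count are identical.
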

Since the above criterion depends only on the linearisations of the affine transformations $T_i$ and not on their translation components, the conclusion of Theorem \ref{th:zeroth-main} holds for all choices of translation vectors $v_i$ when the linearisations $A_i$ remain fixed. We also remark that the only property of Hausdorff dimension needed in the proof is that every subset of an $\ell$-dimensional affine space has Hausdorff dimension bounded above by $\ell$; in particular the conclusion of Theorem \ref{th:zeroth-main} remains valid if the Assouad dimension, packing dimension or upper of lower box dimension is substituted for the Hausdorff dimension. It is not clear to the author whether or not the bound $(N-1)D$ in Theorem \ref{th:zeroth-main} is sharp, and we leave the sharpness of this bound as a question for future researchers.

We will obtain Theorem \ref{th:zeroth-main} as a corollary of the following result which describes more precisely the class of affine iterated function systems  which admit an invariant affine subspace for all choices of translation parameters. We recall that a \emph{subvariety} of $\R^d$ is defined to be any set which is equal to the intersection of the zero loci of a collection of polynomial functions $\R^d\to \R$.
\begin{theorem}\label{th:first-main}
Let $N, \ell, d$ be integers satisfying $1 \leq N-1 \leq \ell <d$ and suppose that  $A_1,\ldots,A_N \in M_d(\R)$ are linear maps all of which are contracting with respect to some fixed norm on $\R^d$. Let $\mathcal{Z}_\ell$ denote the set of all $(v_1,\ldots,v_N) \in \bigoplus_{i=1}^N \R^d\simeq \R^{dN}$ with the property that the affine maps $T_1,\ldots, T_N \colon \R^d \to \R^d$ defined by $T_ix:=A_ix+v_i$ preserve an affine subspace $X\subseteq \R^d$ of dimension less than or equal to $\ell$.
Then:
\begin{enumerate}[(i)]
\item
The set $\mathcal{Z}_\ell$ is a subvariety of $\bigoplus_{i=1}^N \R^d$, and in particular either $\mathcal{Z}_\ell=\bigoplus_{i=1}^N \R^d$ or $\mathcal{Z}_\ell$ has zero Lebesgue measure.
\item
We have $\mathcal{Z}_\ell=\bigoplus_{i=1}^N \R^d$ if and only if the following property holds: for every $(N-1)$-dimensional vector subspace $U$ of $\R^d$ there exists a vector subspace $W$ of $\R^d$ which has dimension at most $\ell$, satisfies $U\subseteq W$, and satisfies $AW\subseteq W$ for every $A$ in the subalgebra of $M_d(\R)$ generated by $A_1,A_2,\ldots,A_N$. 
\end{enumerate}
\end{theorem}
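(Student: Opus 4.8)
The plan is to first convert the geometric condition that defines $\mathcal{Z}_\ell$ into a purely linear-algebraic one, and then obtain both parts from that reformulation. The starting observation is a description of when an affine subspace $X=x_0+V$, with $V\subseteq\R^d$ a linear subspace, is preserved by all of $T_1,\dots,T_N$: writing $T_iX=(A_ix_0+v_i)+A_iV$ and comparing this coset with $x_0+V$, one sees that $T_iX\subseteq X$ for every $i$ if and only if $A_iV\subseteq V$ for every $i$ and $(A_i-I)x_0+v_i\in V$ for every $i$. The point on which everything turns is that, since each $A_i$ is a contraction, $1$ is not an eigenvalue of $A_i$, so $A_i-I$ is invertible and maps any $A_i$-invariant subspace $V$ onto itself. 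Writing $p_i:=(I-A_i)^{-1}v_i$ for the fixed point of $T_i$, the relation $(A_i-I)x_0+v_i\in V$ therefore simplifies to $x_0-p_i\in V$, so a single $x_0$ can be chosen to work for all $i$ precisely when the cosets $p_1+V,\dots,p_N+V$ coincide, that is, when $\Span\{p_i-p_1:2\le i\le N\}\subseteq V$. Let $\langle S\rangle$ denote the smallest subspace of $\R^d$ which contains a given subspace $S$ and is invariant under all of $A_1,\dots,A_N$. Combining the two conditions above yields the reformulation I would work from: $(v_1,\dots,v_N)\in\mathcal{Z}_\ell$ if and only if $\dim\langle S(v)\rangle\le\ell$, where $S(v):=\Span\{p_i-p_1:2\le i\le N\}$.

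For part (i), note that the increasing chain $S\subseteq S+\sum_iA_iS\subseteq S+\sum_iA_iS+\sum_{i,j}A_iA_jS\subseteq\cdots$ of subspaces of $\R^d$ stabilises after at most $d$ steps, so $\langle S(v)\rangle$ is spanned by the finitely many vectors $w(p_i-p_1)$ with $w$ a word in $A_1,\dots,A_N$ of length at most $d$ and $2\le i\le N$. Collecting these vectors as the columns of a matrix $M(v)$, whose entries are affine-linear functions of $v$ (via the linear bijection $v\mapsto(p_1,\dots,p_N)$ followed by fixed linear maps), we get $\dim\langle S(v)\rangle=\mathrm{rank}\,M(v)$, and hence $\mathcal{Z}_\ell$ is exactly the common zero locus of the $(\ell+1)\times(\ell+1)$ minors of $M(v)$. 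This is a subvariety of $\bigoplus_{i=1}^N\R^d$; and since a subvariety which is not the whole space is contained in the zero set of some single polynomial which does not vanish identically, it then has zero Lebesgue measure.

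For part (ii), the reformulation says that $\mathcal{Z}_\ell=\bigoplus_{i=1}^N\R^d$ if and only if $\dim\langle S(v)\rangle\le\ell$ for every $v$. As $v$ ranges over $\bigoplus_{i=1}^N\R^d$ the fixed points $p_1,\dots,p_N$ range independently over $\R^d$, so $S(v)$ ranges over all subspaces of $\R^d$ of dimension at most $N-1$; the condition is therefore that $\dim\langle U\rangle\le\ell$ for every such subspace $U$, which (by monotonicity of $U\mapsto\langle U\rangle$, using $N-1<d$) is equivalent to the same statement for every $U$ of dimension exactly $N-1$. Finally, $\dim\langle U\rangle\le\ell$ holds if and only if there exists a subspace $W$ with $U\subseteq W$, $\dim W\le\ell$, and $A_iW\subseteq W$ for all $i$; and since a subspace is invariant under every $A_i$ if and only if it is invariant under the subalgebra they generate, this is precisely the property appearing in the statement.

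I expect the main obstacle to lie entirely in the first step, and specifically in the observation that the existential quantifier over $x_0$ collapses into a condition on the fixed points $p_i$ modulo $V$ — an argument that genuinely relies on $A_i-I$ being an automorphism of each invariant subspace. Once this reformulation is available, part (i) is a routine determinantal-variety computation and part (ii) is bookkeeping about the invariant-hull operation $\langle\cdot\rangle$; the only secondary points needing care are the surjectivity of $v\mapsto S(v)$ onto the subspaces of dimension at most $N-1$ and the (easy) equivalence between invariance under each generator $A_i$ and invariance under the generated subalgebra.
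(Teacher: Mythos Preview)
Your proposal is correct and follows essentially the same route as the paper: the paper also reduces the problem to the linear reformulation that $(v_1,\dots,v_N)\in\mathcal{Z}_\ell$ if and only if the minimal $(A_1,\dots,A_N)$-invariant subspace containing the differences of fixed points $(I-A_i)^{-1}v_i-(I-A_N)^{-1}v_N$ has dimension at most $\ell$, then establishes part~(i) via determinantal vanishing conditions and part~(ii) via the surjectivity of $(v_1,\dots,v_N)\mapsto(p_1,\dots,p_N)$. The only cosmetic differences are that you use $p_1$ rather than $p_N$ as the reference fixed point, and you explicitly truncate to words of length at most $d$ to obtain finitely many minors, whereas the paper takes the intersection over all words in the semigroup and relies on the fact that an arbitrary intersection of subvarieties is a subvariety.
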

Theorem \ref{th:first-main} implies that if $(A_1,\ldots,A_N) \in M_d(\R)^N$ is irreducible in the sense defined in \S\ref{ss:ss}, and is contracting with respect to some norm on $\R^d$, then for an open, dense, full-measure set of translation parameters $(v_1,\ldots,v_N) \in \R^{dN}$ the associated affine iterated function system $(T_1,\ldots,T_N)$ does \emph{not} preserve a proper affine subspace of $\R^d$. 


In the following section we present some example applications of Theorem \ref{th:zeroth-main}. The proofs of Theorems \ref{th:zeroth-main} and \ref{th:first-main} are presented in \S\ref{se:pf} below.

%
%

\section{Examples}\label{se:ex}

Theorem \ref{th:zeroth-main} encompasses the example described in \S\ref{ss:simp} since in that case the algebra generated by $A_1, A_2, I$ is simply $\{\alpha I \colon \alpha \in \R\}\subset M_2(\R)$ which has dimension $1$. On the other hand Theorem \ref{th:zeroth-main} is clearly more general, and in this section we present some further examples.


\subsection{Example 1}
 Let $\phi,\psi \in \R$ and suppose that $A_1, A_2 \in M_4(\R)$ are given by
\[A_1=\begin{pmatrix} R_\phi &0 \\ 0&R_{-\phi}\end{pmatrix},\qquad A_2=\begin{pmatrix} R_\psi &0 \\ 0&R_{-\psi}\end{pmatrix}\]
where $R_\theta \in M_2(\R)$ denotes the matrix corresponding to an anticlockwise rotation of $\R^2$ by angle $\theta$. Let $\varepsilon \in (0,\frac{1}{4})$ and $v_1, v_2 \in \R^2$ be arbitrary and define $T_1, T_2 \colon \R^4 \to \R^4$ by $T_ix:=2^{-\frac{1}{2}+\varepsilon}A_ix +v_i$. Then $T_1$ and $T_2$ are contracting with respect to the Euclidean norm and their affinity dimension is precisely $2/(1-2\varepsilon) \in (2,4)$. On the other hand since $A_1,A_2$ and the identity all belong to the two-dimensional subalgebra
\[\left\{ \begin{pmatrix} \alpha &\beta &0 &0\\ -\beta &\alpha&0&0\\
0&0&\alpha&-\beta \\
0&0&\beta &\alpha
\end{pmatrix} \colon \alpha, \beta \in \R\right\} \subset M_4(\R)\]
there exists an affine subspace of $\R^4$ with dimension at most $2$ which is preserved by $T_1$ and $T_2$. Consequently the attractor of $(T_1, T_2)$ has dimension at most $2$ and in particular has dimension less than the affinity dimension. Since $T_1$ and $T_2$ are similitudes it follows from Hutchinson's theorem that $(T_1,T_2)$ cannot satisfy the open set condition.


\subsection{Example 2}\label{ss:e2} Define two matrices $B_1,B_2 \in M_4(\R)$ by
\[B_1=\begin{pmatrix}0&-1&-1&0\\ 1&0&0&-1\\ 1&0&0&-1\\ 0&1&1&0\end{pmatrix},\qquad B_2=\begin{pmatrix}1&0&0&1\\ 0&1&-1&0\\ 0&-1&1&0\\ 1&0&0&1\end{pmatrix}.\]
The reader may verify that $B_1B_2=B_2B_1=0$, $B_1^2=2B_2-4I$ and $B_2^2=2B_2$. It follows from this that the subalgebra of $M_4(\R)$ generated by $B_1, B_2$ and the identity is simply the linear span of $B_1, B_2$ and $I$, 
\[\left\{\begin{pmatrix}
\alpha & -\beta &-\beta & \gamma\\
\beta &\alpha & -\gamma & -\beta\\
\beta &-\gamma &\alpha & -\beta \\
\gamma &\beta &\beta &\alpha\end{pmatrix}
\colon \alpha,\beta,\gamma \in \R\right\}.\]
In particular if $T_1,T_2 \colon \R^4 \to \R^4$ are affine contractions both having the form $T_ix=(\alpha_i I+\beta_i B_1 +\gamma_i B_2)x+v_i$ then they preserve an affine subspace $X\subset \R^4$ of dimension not greater than $3$. The attractor of $(T_1,T_2)$ consequently has dimension not greater than $3$. If $\alpha_1$ and $\alpha_2$ are chosen in $(2^{-1/3},1)$ and the numbers $\beta_i, \gamma_i$ are taken to be zero then the resulting iterated function system is contracting with respect to the Euclidean norm and is easily seen to have affinity dimension exceeding $3$. By continuity of the affinity dimension (see \cite{FeSh14,Mo16}) this situation persists if $\alpha_1, \alpha_2 \in (2^{-1/3},1)$ and the numbers $\beta_i, \gamma_i$ are chosen sufficiently close to zero.


\subsection{Example 3} Let $d \geq 1$ and $A_1,\ldots,A_N \in \GL_d(\R)$ and recall that the \emph{joint spectral radius} and \emph{lower spectral radius} of $(A_1,\ldots,A_N)$ are defined respectively by
\begin{align*}\overline{\varrho}(A_1,\ldots,A_N)&:= \lim_{n \to \infty} \max_{1 \leq i_1,\ldots, i_n \leq N}  \left\|A_{i_1}\cdots A_{i_n}\right\|^{\frac{1}{n}}\\
&=\inf_{n \geq} \max_{1 \leq i_1,\ldots, i_n \leq N}  \left\|A_{i_1}\cdots A_{i_n}\right\|^{\frac{1}{n}}\end{align*}
and
\begin{align*}\underline{\varrho}(A_1,\ldots,A_N)&:= \lim_{n \to \infty} \min_{1 \leq i_1,\ldots, i_n \leq N}  \left\|A_{i_1}\cdots A_{i_n}\right\|^{\frac{1}{n}}\\
&= \inf_{n \geq 1} \min_{1 \leq i_1,\ldots, i_n \leq N}  \left\|A_{i_1}\cdots A_{i_n}\right\|^{\frac{1}{n}}.\end{align*}
(For a proof of the validity of the above formulas see \cite{Ju09}.) Suppose that there exists an integer $k\geq (N-1)d^2$ such that 
\[N^{-\frac{1}{k}} <\underline{\varrho}(A_1,\ldots,A_N) \leq \overline{\varrho}(A_1,\ldots,A_N)<1.\]
Since the joint spectral radius of $A_1,\ldots,A_N$ is less than $1$ there exists a norm $\threebar{\cdot}$ on $\R^d$ such that $\max_{1 \leq i \leq N}\threebar{A_i}<1$ by a classical result of Rota and Strang (\cite{RoSt60}, reprinted in \cite{Ro03}). For every $B \in M_d(\R)$ let $B^{\oplus k} =B \oplus B \oplus \cdots \oplus B \in M_{kd}(\R)$ denote the direct sum of $k$ identical copies of $B$. Let $v_1,\ldots,v_N \in \R^{kd}$ be arbitrary and define affine transformations $T_1,\ldots,T_N \colon \R^{kd} \to \R^{kd}$ by $T_ix:=A_i^{\oplus k}x+v_i$. We observe that these maps are contractions with respect to the norm on $\R^{kd}$ defined by taking the direct sum of $k$ copies of the norm $\threebar{\cdot}$ on $\R^d$. 

The algebra generated by $A_1^{\oplus k},\ldots,A_N^{\oplus k}, I^{\oplus k}$ has the same dimension as that generated by $A_1,\ldots,A_N, I$, which is at most $d^2$ since the latter algebra is a subalgebra of $M_d(\R)$. Consequently $T_1,\ldots,T_N$ preserve an affine subspace $X \subset \R^{kd}$ of dimension not greater than $(N-1)d^2$ and the dimension of the attractor can be no greater than that value. Since
\begin{align*}\lim_{n \to \infty} \left(\sum_{1 \leq i_1,\ldots,i_n \leq N} \varphi^k(A_{i_1}^{\oplus k}\cdots A_{i_n}^{\oplus k}) \right)^{\frac{1}{n}}&=\lim_{n \to \infty} \left(\sum_{1 \leq i_1,\ldots,i_n \leq N} \|A_{i_1}\cdots A_{i_n}\|^k \right)^{\frac{1}{n}}\\
&\geq N\underline{\varrho}(A_1,\ldots,A_N)^k> 1\end{align*}
we have
\[\dimaff (T_1,\ldots,T_N)> k\geq(N-1)d^2\]
so that the dimension of the attractor is strictly less than the affinity dimension. 

%
%

\section{Proof of Theorems \ref{th:zeroth-main} and \ref{th:first-main}}\label{se:pf}

\subsection{Proof of Theorem \ref{th:first-main}}
We prove Theorem \ref{th:first-main} by reducing it to a series of lemmas. The following two lemmas combined yield Theorem \ref{th:first-main}(ii):
\begin{lemma}\label{le:re}
Let $N, \ell, d$ be integers satisfying $1 \leq N-1 \leq \ell <d$, let $A_1,\ldots,A_N \in M_d(\R)$ be linear maps all of which are contracting with respect to some fixed norm on $\R^d$, and let $v_1,\ldots,v_N \in \R^d$. Then the affine maps $T_1,\ldots, T_N \colon \R^d \to \R^d$ defined by $T_ix:=A_ix+v_i$ preserve an affine subspace of $\R^d$ with dimension $\ell$ if and only if there exists an $\ell$-dimensional subspace $W \subseteq \R^d$ such that for every $i=1,\ldots,N$ we have both  $A_iW\subseteq W$ and $(I-A_i)^{-1}v_i - (I-A_N)^{-1}v_N \in W$.
\end{lemma}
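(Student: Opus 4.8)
\textbf{Proof proposal for Lemma \ref{le:re}.}

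The plan is to exploit the fact that an affine subspace $X \subseteq \R^d$ of dimension $\ell$ can be written as $X = w_0 + W$ for some $w_0 \in \R^d$ and some $\ell$-dimensional linear subspace $W$, and to decode what $T_i X \subseteq X$ means in terms of $W$ and $w_0$. Since each $A_i$ is contracting with respect to a fixed norm, $I - A_i$ is invertible for every $i$, so each $T_i$ has a unique fixed point $p_i := (I - A_i)^{-1} v_i$; rewriting $T_i x = A_i(x - p_i) + p_i$ will be the central bookkeeping device. First I would establish the forward direction: if $T_i X \subseteq X$ for all $i$, then comparing dimensions and using that each $T_i$ is injective forces $T_i X = X$, hence $T_i$ restricted to the affine hull behaves as an invertible affine map of $X$ onto itself. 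Taking the linear part, $A_i W = W$, which in particular gives $A_i W \subseteq W$. For the translation condition, pick any $x_0 \in X$; then $T_i x_0 \in X$ and $T_N x_0 \in X$, so $T_i x_0 - T_N x_0 \in W$ (the difference of two points of $X$ lies in the direction space $W$). Expanding $T_i x_0 - T_N x_0 = (A_i - A_N)x_0 + (v_i - v_N)$ is awkward directly, so instead I would use the fixed-point form: $T_i x_0 - p_i = A_i(x_0 - p_i) \in W$ provided $x_0 - p_i \in W$, which is not yet known. The cleaner route is to observe that $p_i \in X$: indeed $X$ is $T_i$-invariant and compact-fixed-point arguments (or simply iterating $T_i$ on any point of $X$ and using contractivity and closedness of $X$) show the unique fixed point $p_i$ of $T_i$ lies in $X$. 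Then $p_i - p_N \in W$ for all $i$, which is exactly $(I-A_i)^{-1}v_i - (I-A_N)^{-1}v_N \in W$.

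For the converse, suppose $W$ is an $\ell$-dimensional subspace with $A_i W \subseteq W$ and $p_i - p_N \in W$ for all $i$. Set $X := p_N + W$. I claim $T_i X \subseteq X$ for every $i$. Take $x = p_N + w$ with $w \in W$. Then
\[
T_i x = A_i(x - p_i) + p_i = A_i(p_N + w - p_i) + p_i = A_i w + A_i(p_N - p_i) + p_i.
\]
Now $A_i w \in W$ since $A_i W \subseteq W$; and $p_N - p_i \in W$ so $A_i(p_N - p_i) \in W$; hence $T_i x - p_i \in W$, i.e. $T_i x \in p_i + W$. Finally $p_i + W = p_N + (p_i - p_N) + W = p_N + W = X$ because $p_i - p_N \in W$. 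Therefore $T_i x \in X$, as required, and $X$ has dimension $\ell$.

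The main obstacle I anticipate is the forward direction's claim that $A_i W \subseteq W$ actually upgrades to $A_i W = W$ and that the fixed point $p_i$ lies in $X$; both are needed to extract the stated translation condition cleanly. The resolution is that $T_i$ is an affine bijection of $\R^d$ (its linear part $A_i$ is invertible), so $T_i X$ is an affine subspace of the same dimension $\ell$ as $X$; from $T_i X \subseteq X$ and equality of dimensions we get $T_i X = X$. Restricting the bijection $T_i$ to $X$ gives an affine automorphism of $X$, whose linear part is the restriction of $A_i$ to the direction space $W$; an automorphism forces $A_i W = W$. Invariance of $X$ under the contraction $T_i$ together with closedness of $X$ (every affine subspace is closed) then forces the unique fixed point $p_i = \lim_{n\to\infty} T_i^n x$ of $T_i$ to lie in $X$ for any starting point $x \in X$. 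One should also note at the outset that the hypothesis $N - 1 \le \ell$ is not actually used in this lemma — it is needed only downstream when combining with the companion lemma to prove Theorem \ref{th:first-main}(ii) — so I would simply prove the equivalence for any $\ell < d$ and not worry about that constraint here.
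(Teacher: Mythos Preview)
Your approach is essentially the same as the paper's: both directions hinge on the fixed points $p_i=(I-A_i)^{-1}v_i$, with $X:=p_N+W$ in one direction and $p_i\in X$ (hence $p_i-p_N\in W$) in the other. The paper carries out the algebra by expanding $T_ix=A_ix+v_i$ directly, whereas you use the conjugate form $T_ix=A_i(x-p_i)+p_i$; these are interchangeable and your converse direction is correct as written.

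There is one genuine gap in the forward direction. You assert that ``$T_i$ is an affine bijection of $\R^d$ (its linear part $A_i$ is invertible)'' and use this to deduce $T_iX=X$ and hence $A_iW=W$. But the lemma only assumes $A_i\in M_d(\R)$ is contracting with respect to some norm, which does \emph{not} force invertibility (the zero map, or any rank-deficient contraction, is allowed). Fortunately your own fixed-point argument already supplies everything needed without invertibility: once you have established $p_i\in X$ via iteration and closedness of $X$, write $X=p_i+W$; then for any $w\in W$ the point $p_i+w$ lies in $X$, so
\[
T_i(p_i+w)=A_i\bigl((p_i+w)-p_i\bigr)+p_i=A_iw+p_i\in X=p_i+W,
\]
whence $A_iw\in W$. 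This is exactly how the paper argues, and it yields only $A_iW\subseteq W$, which is all the lemma claims. Your side remark that the hypothesis $N-1\le\ell$ plays no role in this particular lemma is correct.
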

\begin{proof}
Suppose first that there exists an $\ell$-dimensional subspace $W$  of $\R^d$  such that for every $i=1,\ldots,N$ we have both  $A_iW\subseteq W$ and $(I-A_i)^{-1}v_i - (I-A_N)^{-1}v_N \in W$. Let $X \subseteq \R^d$ denote the affine subspace $X:=W+(I-A_N)^{-1}v_N\subseteq \R^d$ which clearly has dimension $\ell$. We observe that $X=W+(I-A_i)^{-1}v_i$ for every $i=1,\ldots,N$ since for every $i$ the difference $(I-A_i)^{-1}v_i-(I-A_N)^{-1}v_N$ belongs to $W$. We claim that $T_iX\subseteq X$ for every $i=1,\ldots,N$. Given $x \in X$ and $i \in\{1,\ldots,N\}$ let us write $x=w+(I-A_i)^{-1}v_i$ where $w \in W$. We have
\begin{align*}T_ix &:= A_ix + v_i\\
&=A_iw + A_i(I-A_i)^{-1}v_i + v_i\\
&=A_iw + A_i(I-A_i)^{-1}v_i + (I-A_i)(I-A_i)^{-1}v_i\\
&=A_iw +(I-A_i)^{-1}v_i
\end{align*}
and since $A_iw \in A_iW\subseteq W$ this is an element of $X=W+(I-A_i)^{-1}v_i$ as required.

In the other direction, suppose that $X$ is an $\ell$-dimensional affine subspace of $\R^d$ such that $T_iX \subseteq X$ for every $i=1,\ldots,N$. Since each $T_i$ is contracting with respect to some fixed norm on $\R^d$ it has a unique fixed point in $\R^d$, and this fixed point is precisely $(I-A_i)^{-1}v_i$. On the other hand each $T_i$ is also a contraction of $X$ and hence has a unique fixed point in $X$, and these observations combined imply that $(I-A_i)^{-1}v_i \in X$ for every $i=1,\ldots,N$. Let us write $X=W+(I-A_N)^{-1}v_N$ where $W$ is a vector subspace of $\R^d$ with the same dimension as $X$. Since $(I-A_i)^{-1}v_i \in X$ for every $i=1,\ldots,N$ it follows that $(I-A_i)^{-1}v_i-(I-A_N)^{-1}v_N \in W$ for every $i=1,\ldots,N-1$ and clearly this also holds for $i=N$. We claim that $A_iW \subseteq W$ for every $i=1,\ldots,N$. Given $i \in \{1,\ldots,N\}$ and $w \in W$, write $w=x-(I-A_i)^{-1}v_i$ where $x \in X$. We have
\begin{align*}A_iw &= A_i(x-(I-A_i)^{-1}v_i)\\
&=A_ix - A_i(I-A_i)^{-1}v_i\\
&=A_ix-(I-A_i)^{-1}v_i + (I-A_i)^{-1}v_i - A_i(I-A_i)^{-1}v_i\\
&=A_ix-(I-A_i)^{-1}v_i+v_i\\
&=T_ix-(I-A_i)^{-1}v_i\end{align*}
and this belongs to $W=X-(I-A_i)^{-1}v_i$ since $T_ix \in X$. Thus $A_iW\subseteq W$ for every $i=1,\ldots,N$ as required.
 \end{proof}
\begin{lemma}
Let $N, \ell, d$ be integers satisfying $1 \leq N-1 \leq \ell <d$ and let $A_1,\ldots,A_N \in M_d(\R)$ be linear maps none of which has $1$ as an eigenvalue. Then the following are equivalent:
\begin{enumerate}[(i)]
\item
For every $v_1,\ldots,v_N \in \R^d$  there exists a subspace $W \subseteq \R^d$ of dimension at most $\ell$ such that for every $i=1,\ldots,N$ we have both $A_iW\subseteq W$ and $(I-A_i)^{-1}v_i-(I-A_N)^{-1}v_N \in W$.
\item
Every $(N-1)$-dimensional subspace $U \subseteq \R^d$  is contained in a subspace $W \subseteq \R^d$ of dimension at most $\ell$ such that $AW \subseteq W$ for every $A$ in the subalgebra of $M_d(\R)$ generated by $A_1,\ldots,A_N$.
\end{enumerate}
\end{lemma}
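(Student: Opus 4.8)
The plan is to prove the equivalence by first replacing ``$A_iW\subseteq W$ for all $i$'' with the equivalent condition that $W$ is invariant under the whole generated algebra, and then, in each direction, by translating the problem into a statement about the span of the fixed-point differences $(I-A_i)^{-1}v_i-(I-A_N)^{-1}v_N$.

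First I would record the elementary observation that, for any fixed vector subspace $W\subseteq\R^d$, the conditions ``$A_iW\subseteq W$ for every $i=1,\ldots,N$'' and ``$AW\subseteq W$ for every $A$ in the subalgebra of $M_d(\R)$ generated by $A_1,\ldots,A_N$'' are equivalent. Indeed the stabiliser $\{A\in M_d(\R):AW\subseteq W\}$ is itself a subalgebra of $M_d(\R)$ containing the identity, so if it contains $A_1,\ldots,A_N$ then it contains the entire generated algebra; the reverse implication is immediate. This lets me move freely between the two formulations of invariance.

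Next I would prove (ii) $\Rightarrow$ (i). Given $v_1,\ldots,v_N\in\R^d$, put $u_i:=(I-A_i)^{-1}v_i-(I-A_N)^{-1}v_N$ for $i=1,\ldots,N$, which is well defined because no $A_i$ has $1$ as an eigenvalue; since $u_N=0$ the span $U_0:=\Span\{u_1,\ldots,u_{N-1}\}$ has dimension at most $N-1$. Because $N-1\leq\ell<d$ I may enlarge $U_0$ to a subspace $U$ of dimension exactly $N-1$, and then apply (ii) to $U$ to obtain a subspace $W\supseteq U\supseteq U_0$ of dimension at most $\ell$ that is invariant under the generated algebra, hence under each $A_i$. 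As $u_i\in U_0\subseteq W$ for every $i$, this $W$ witnesses condition (i).

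For the reverse implication (i) $\Rightarrow$ (ii), let $U\subseteq\R^d$ be an $(N-1)$-dimensional subspace with basis $u_1,\ldots,u_{N-1}$, and set $v_N:=0$ and $v_i:=(I-A_i)u_i$ for $i=1,\ldots,N-1$. Then $(I-A_i)^{-1}v_i-(I-A_N)^{-1}v_N$ equals $u_i$ for $i=1,\ldots,N-1$ and equals $0$ for $i=N$, so applying (i) to this tuple of translation vectors produces a subspace $W$ of dimension at most $\ell$, invariant under every $A_i$ (equivalently under the generated algebra), and containing each $u_i$, hence containing $U$; this is precisely what (ii) asserts. I do not expect any serious obstacle in this argument: the only points needing a word of justification are the stabiliser-subalgebra observation used to interchange the two notions of invariance, and the elementary dimension inequality $N-1\leq\ell<d$ needed to enlarge $U_0$ to an $(N-1)$-dimensional subspace in the first implication.
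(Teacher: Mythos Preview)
Your proof is correct and follows essentially the same approach as the paper's own proof: both directions proceed by the same substitutions $v_i=(I-A_i)u_i$, $v_N=0$ for (i)$\Rightarrow$(ii) and by choosing an $(N-1)$-dimensional subspace containing the fixed-point differences for (ii)$\Rightarrow$(i), together with the stabiliser-subalgebra observation. The only cosmetic difference is that you isolate the stabiliser remark at the outset, whereas the paper invokes it inline at the end of the (i)$\Rightarrow$(ii) argument.
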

\begin{proof}
Supposing that (i) holds, let $U \subseteq \R^d$ be an $(N-1)$-dimensional subspace and $u_1,\ldots,u_{N-1}$ a basis for $U$. Define $v_i:=(I-A_i)u_i$ for $i=1,\ldots,N-1$ and define also $v_N:=0$. Applying (i), let $W \subseteq \R^d$ be a subspace of dimension at most $\ell$ such that for every $i=1,\ldots,N$ we have both $A_iW\subseteq W$ and $(I-A_i)^{-1}v_i-(I-A_N)^{-1}v_N \in W$. We have $u_i \in W$ for every $i=1,\ldots,N-1$ and therefore $U \subseteq W$. Clearly the set of all $A \in M_d(\R)$ which satisfy $AW \subseteq W$ is an algebra, and this set also contains $A_1,\ldots,A_N$, so (ii) holds. We have proved (i)$\implies$(ii).

If (ii) holds, given $v_1,\ldots,v_N \in \R^d$ let $U \subseteq \R^d$ be an $(N-1)$-dimensional subspace which contains the span of the vectors $(I-A_i)^{-1}v_i-(I-A_N)^{-1}v_N$ for $i=1,\ldots,N$. By (ii) there exists a subspace $W \subseteq \R^d$ of dimension at most $\ell$ such that $U\subseteq W$ and such that $AW \subseteq W$ for every $A$ in the subalgebra of $M_d(\R)$ generated by $A_1,\ldots,A_N$. Clearly $W$ has the properties required by (i). This proves (ii)$\implies$(i).\end{proof}
The remaining two lemmas treat Theorem \ref{th:first-main}(i):
\begin{lemma}
Let $N, \ell, d$ be integers satisfying $1 \leq N-1 \leq \ell <d$ and let $A_1,\ldots,A_N \in M_d(\R)$ be linear maps all of which are contracting with respect to some norm on $\R^d$. Let $\mathcal{Z}_\ell\subseteq \R^d \oplus \cdots \oplus \R^d\simeq \R^{dN}$ denote the set of all $(v_1,\ldots,v_N)$ with the property that the affine maps $T_1,\ldots, T_N \colon \R^d \to \R^d$ defined by $T_ix:=A_ix+v_i$ preserve an affine subspace of $\R^d$ with dimension at most $\ell$. Then $\mathcal{Z}_\ell$ is a subvariety of $\R^{dN}$.
\end{lemma}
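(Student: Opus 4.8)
The plan is to eliminate the existential quantifier over affine subspaces in the definition of $\mathcal{Z}_\ell$ and to exhibit membership in $\mathcal{Z}_\ell$ as the simultaneous vanishing of finitely many polynomials in $(v_1,\ldots,v_N)$. Since each $A_i$ is contracting with respect to some norm it has spectral radius strictly less than $1$, so $I-A_i$ is invertible and the fixed point $q_i:=(I-A_i)^{-1}v_i$ of $T_i$ is a linear function of $v_i$. By the argument of Lemma~\ref{le:re} (whose proof does not use the hypothesis $N-1\le\ell$ and applies verbatim for subspaces of any dimension less than $d$), a point $(v_1,\ldots,v_N)$ lies in $\mathcal{Z}_\ell$ if and only if there is a linear subspace $W\subseteq\R^d$ with $\dim W\le\ell$ such that $A_iW\subseteq W$ for every $i$ and $q_i-q_N\in W$ for every $i=1,\ldots,N-1$.

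The key observation is that this condition refers only to the dimension of a single, explicitly described subspace. Let $\mathcal{A}\subseteq M_d(\R)$ be the subalgebra generated by $A_1,\ldots,A_N$ and the identity, and for $v=(v_1,\ldots,v_N)$ put $U(v):=\Span\{q_1-q_N,\ldots,q_{N-1}-q_N\}$ and $W(v):=\mathcal{A}\cdot U(v)=\Span\{Au : A\in\mathcal{A},\ u\in U(v)\}$. Because $\{A\in M_d(\R) : AW\subseteq W\}$ is always a subalgebra containing the identity, any subspace $W$ as above is automatically invariant under all of $\mathcal{A}$ and contains $U(v)$, hence contains $W(v)$; conversely $W(v)$ is itself invariant under $\mathcal{A}$ and contains $U(v)$. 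Therefore a subspace $W$ with the required properties exists if and only if $\dim W(v)\le\ell$, i.e. $\mathcal{Z}_\ell=\{v : \dim W(v)\le\ell\}$.

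It then remains to recognise the right-hand side as a subvariety. Fix a basis $B_1,\ldots,B_D$ of the finite-dimensional algebra $\mathcal{A}$. Then $W(v)$ is spanned by the $D(N-1)$ vectors $B_j(q_i-q_N)$ with $1\le j\le D$ and $1\le i\le N-1$, each of which is a linear function of $(v_1,\ldots,v_N)$ since $q_i=(I-A_i)^{-1}v_i$ is. Assembling these vectors as the columns of a $d\times D(N-1)$ matrix $M(v)$, whose entries are linear forms in $(v_1,\ldots,v_N)$, we have $\dim W(v)=\operatorname{rank}M(v)$, so $v\in\mathcal{Z}_\ell$ if and only if every $(\ell+1)\times(\ell+1)$ minor of $M(v)$ vanishes. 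These minors are polynomials in $(v_1,\ldots,v_N)$, so $\mathcal{Z}_\ell$ is their common zero locus, hence a subvariety of $\R^{dN}$.

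I expect the main conceptual step to be the reduction in the second paragraph: replacing ``there exists an invariant subspace of dimension at most $\ell$ containing the fixed-point differences'' by the single condition $\dim(\mathcal{A}\cdot U(v))\le\ell$. This is what keeps the argument inside real algebraic geometry. A more direct approach---presenting $\mathcal{Z}_\ell$ as the image under the projection $\R^{dN}\times\operatorname{Gr}(\ell,d)\to\R^{dN}$ of the incidence set of pairs $(v,W)$ with $W$ invariant and $q_i-q_N\in W$---would require the image of a real projective variety under a linear projection to be Zariski closed, which fails in general over $\R$; the device of passing to the smallest invariant subspace sidesteps this. The remaining points (invertibility of $I-A_i$, the fact that $\{A : AW\subseteq W\}$ is an algebra, and that minors are polynomials in the entries) are routine.
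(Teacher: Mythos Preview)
Your proof is correct and follows essentially the same route as the paper: both reduce membership in $\mathcal{Z}_\ell$ to the single condition $\dim W(v)\le\ell$, where $W(v)$ is the smallest $A_1,\ldots,A_N$-invariant subspace containing the fixed-point differences $q_i-q_N$, and then express that rank condition as the vanishing of determinantal polynomials. Your implementation is slightly more streamlined---you pick a finite basis $B_1,\ldots,B_D$ of the algebra $\mathcal{A}$ and take the $(\ell+1)\times(\ell+1)$ minors of the resulting matrix, whereas the paper works with the (infinite) semigroup $H$ generated by the $A_i$ and the identity, composes with auxiliary projections $P\colon\R^d\to\R^{\ell+1}$ to obtain square determinants, and relies on the fact that an arbitrary intersection of subvarieties is a subvariety---but the substance is the same.
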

\begin{proof}
By Lemma \ref{le:re} we have $(v_1,\ldots,v_N) \in \mathcal{Z}_\ell$ if and only if there exists a subspace $W \subseteq \R^d$ of dimension at most $\ell$ such that for every $i=1,\ldots,N$ we have both  $A_iW\subseteq W$ and $(I-A_i)^{-1}v_i - (I-A_N)^{-1}v_N \in W$. Let $H \subset M_d(\R)$ denote the semigroup generated by $A_1,\ldots,A_N$ together with the identity and define
\[\widehat{W}:=\Span \left\{B\left(I-A_{i_1})^{-1}v_{i} - (I-A_N)^{-1}v_N\right)\colon B \in H\text{ and }1 \leq i \leq N-1\right\}.\]
We have  $A_i\widehat{W}\subseteq \widehat{W}$ and $(I-A_i)^{-1}v_i - (I-A_N)^{-1}v_N \in \widehat{W}$ for every $i=1,\ldots,N$ and it is clear that any subspace $W \subseteq \R^d$ with those properties must contain $\widehat{W}$. We conclude from this that $(v_1,\ldots,v_N) \in \mathcal{Z}_\ell$ if and only if $\dim \widehat{W}\leq \ell$. The latter holds if and only if for every choice of $B_1,\ldots,B_{\ell+1} \in H$ and $i_1,\ldots,i_{\ell+1} \in \{1,\ldots,N-1\}$ the vectors
\[B_j \left((I-A_{i_j})^{-1}v_{i_j} - (I-A_N)^{-1}v_N\right)\]
for $j=1,\ldots,\ell+1$ are linearly dependent. This in turn holds if and only if  for every choice of $B_1,\ldots,B_{\ell+1} \in H$ and $i_1,\ldots,i_{\ell+1} \in \{1,\ldots,N-1\}$ and every linear map $P \colon \R^d \to \R^{\ell+1}$ the vectors 
\[PB_j \left((I-A_{i_j})^{-1}v_{i_j} - (I-A_N)^{-1}v_N\right)\]
for $j=1,\ldots,\ell+1$ are linearly dependent. The desired result will therefore follow if for every \emph{fixed} choice of $B_1,\ldots,B_{\ell+1} \in H$ and $i_1,\ldots,i_{\ell+1} \in \{1,\ldots,N-1\}$ and every \emph{fixed} linear map $P \colon \R^d \to \R^{\ell+1}$, the set of all $(v_1,\ldots,v_N) \in \R^{dN}$ such that the vectors 
\[PB_j \left((I-A_{i_j})^{-1}v_{i_j} - (I-A_N)^{-1}v_N\right)\]
for $j=1,\ldots,\ell+1$ are linearly dependent is a subvariety. (This is true since $\mathcal{Z}_\ell$ is precisely equal to the intersection of all subvarieties of the form just described, and it is clear from the definition that the intersection of an arbitrary family of subvarieties of $\R^{dN}$ is itself a subvariety.) But those $\ell+1$ vectors are linearly dependent if and only if they form the columns of an $(\ell+1)\times (\ell+1)$ matrix with determinant zero, and this is clearly equivalent to the statement that $(v_1,\ldots,v_N)$ belongs to the zero locus of a certain polynomial $\R^{dN} \to \R$ depending on $P$, $B_1,\ldots,B_{\ell+1}$ and $i_1,\ldots,i_{\ell+1}$. The result is proved.
\end{proof}
To complete Theorem \ref{th:first-main}(i) we note the following standard result:
\begin{lemma}
Let $m \geq 1$ and let $\mathcal{Z}$ be a subvariety of $\R^m$. Then either $\mathcal{Z}=\R^m$, or $\mathcal{Z}$ has zero Lebesgue measure.
\end{lemma}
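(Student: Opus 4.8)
The plan is to reduce the statement to the classical fact that the zero locus of a single nonzero polynomial on $\R^m$ is Lebesgue-null, and then to prove that fact by induction on $m$ via Fubini's theorem.

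First I would unwind the definition: a subvariety $\mathcal{Z}$ of $\R^m$ may be written as $\mathcal{Z}=\bigcap_{\alpha\in\mathcal{A}} p_\alpha^{-1}(0)$ for some family $\{p_\alpha\}_{\alpha\in\mathcal{A}}$ of polynomial functions $\R^m\to\R$. If every $p_\alpha$ is the zero polynomial (in particular if $\mathcal{A}=\varnothing$) then $\mathcal{Z}=\R^m$ and there is nothing to prove; otherwise there is some $\alpha_0$ with $p_{\alpha_0}\not\equiv 0$, and then $\mathcal{Z}\subseteq p_{\alpha_0}^{-1}(0)$. Hence it suffices to show that the zero set of an arbitrary nonzero polynomial $p\colon\R^m\to\R$ has zero Lebesgue measure (noting that this set is closed, hence measurable, since $p$ is continuous).

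I would prove the latter by induction on $m$. The base case $m=1$ is immediate, a nonzero univariate polynomial having only finitely many roots. For the inductive step, regard $p$ as a polynomial in the final coordinate, $p(x_1,\ldots,x_m)=\sum_{j=0}^{k} q_j(x_1,\ldots,x_{m-1})\,x_m^{\,j}$, where each $q_j$ is a polynomial in $m-1$ variables; since $p\not\equiv 0$, at least one $q_j$ is not the zero polynomial. By the inductive hypothesis the set $E:=\{y\in\R^{m-1}: q_j(y)=0 \text{ for all } j\}$ is contained in the zero set of that nonvanishing $q_j$ and so has zero Lebesgue measure in $\R^{m-1}$. For each $y\notin E$ the map $t\mapsto p(y,t)$ is a nonzero univariate polynomial, hence the slice $\{t\in\R:p(y,t)=0\}$ is finite and thus null. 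Since $E\times\R$ is null in $\R^m$ and every slice of $p^{-1}(0)$ over $\R^{m-1}\setminus E$ is null in $\R$, Fubini's theorem yields that $p^{-1}(0)$ has zero Lebesgue measure, completing the induction and hence the lemma.

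I do not anticipate any genuine obstacle: the argument is entirely routine. The only points deserving a moment's care are the observation that some coefficient $q_j$ is nonzero (immediate from the definition of the zero polynomial in several variables) and the measurability and slicing hypotheses needed to invoke Fubini's theorem, both of which follow from the continuity of $p$ together with the inductive hypothesis.
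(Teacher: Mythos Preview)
Your proposal is correct and follows essentially the same route as the paper's own proof: reduce to the zero set of a single nonzero polynomial, then induct on $m$ by expanding in the last variable and applying Fubini/Tonelli to the slices. The only cosmetic difference is that the paper singles out the leading coefficient $p_n$ rather than your set $E$ of common zeros of all the $q_j$, but the logic is identical.
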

\begin{proof}
To prove the lemma it suffices to show that for every nonzero polynomial $p \colon \R^m \to \R$ the set $\{x \in \R^m \colon p(x)=0\}$ has zero Lebesgue measure, for which we use induction on $m$. The case $m=1$ is trivial since a nonzero polynomial $\R \to \R$ can have only finitely many zeros. Given the case $m$, consider a nonzero polynomial $p \colon \R^{m+1} \to \R$ which we write in the form $p(x_1,\ldots,x_{m+1})=\sum_{j=0}^n x_{m+1}^j p_j(x_1,\ldots,x_m)$ for suitable polynomials $p_j \colon \R^m \to \R$. Without loss of generality we may assume that $p_n$ is not the zero polynomial. By the induction hypothesis, for Lebesgue almost every $(x_1,\ldots,x_m) \in \R^m$ we have $p_n(x_1,\ldots,x_m)\neq 0$. For these values of $(x_1,\ldots,x_m)$ the polynomial $x_{m+1} \mapsto \sum_{j=0}^n x_{m+1}^j p_j(x_1,\ldots,x_m)$ is not the zero polynomial, hence evaluates to zero for only finitely many values of $x_{m+1} \in \R$. Thus for Lebesgue almost every $(x_1,\ldots,x_m) \in \R^m$ the set $\{x_{m+1} \in \R \colon p(x_1,\ldots,x_{m+1})=0\}$ has zero Lebesgue measure. It follows by Tonelli's theorem that $\{x \in \R^{m+1} \colon p(x)=0\}$ has zero Lebesgue measure. This completes the induction step and proves the lemma.
\end{proof}

\subsection{Proof of Theorem \ref{th:zeroth-main}}
Let $(T_1,\ldots,T_N)$ be an affine iterated function system acting on $\R^d$ and for every $i=1,\ldots, N$ let $A_i$ denote the linearisation of $T_i$. Let $\mathcal{A}$ denote the smallest subalgebra of $M_d(\R)$ which contains $A_1,\ldots,A_N$ and the identity, and let $D$ denote the dimension of $\mathcal{A}$ as a vector space. If $(N-1)D \geq d$ then the result is trivial since we may take $X:=\R^d$, so we assume otherwise.  We will show that the property described in Theorem \ref{th:first-main}(ii) holds with $\ell:=(N-1)D<d$. If $U \subseteq \R^d$ is a vector space of dimension $(N-1)$, let $u_1,\ldots,u_{N-1}$ be a basis for $U$. Define
\[W:=\left\{Au \colon A \in \mathcal{A}\text{ and }u \in U\right\}.\]
It is clear that $W$ is a vector space such that $AW\subseteq W$ for every $A$ in the subalgebra of $M_d(\R)$ generated by $A_1,A_2,\ldots,A_N$, and that $U \subseteq W$ since $I \in \mathcal{A}$. Moreover we have
\[W=\Span \bigcup_{i=1}^{N-1}\left\{Au_i \colon A \in \mathcal{A}\right\}.\]
This is the span of a union of $(N-1)$ subspaces of $\R^d$ each of which is a linear image of $\mathcal{A}$ and hence each of which has dimension at most $D$. Consequently $\dim W \leq (N-1)D$. It follows by  Theorem \ref{th:first-main}(ii) that $(T_1,\ldots,T_N)$ admits an invariant affine subspace $X \subseteq \R^d$ of dimension not greater than $(N-1)D$. It follows from the uniqueness clause of Hutchinson's theorem that any closed, nonempty subset of $\R^d$ which is mapped inside itself by every $T_i$ must contain the attractor of $(T_1,\ldots,T_N)$ as a subset. In particular the attractor of $(T_1,\ldots,T_N)$ is a subset of $X$. The Hausdorff dimension of that attractor is therefore bounded by the dimension of $X$, which in turn is bounded by $(N-1)D$.

\section{Acknowledgements}
This research was partially supported by the Leverhulme Trust (Research Project Grant RPG-2016-194). The author thanks C. Sert, T. M. Jordan, P. Shmerkin and B. Solomyak for helpful conversations and remarks. The author also thanks two anonymous referees for suggesting a number of improvements to the exposition.

\bibliographystyle{acm}
\bibliography{affine-affine}

\end{document}